\def\timestamp{%
Time-stamp: <hicomp-amsart-2.tex: Thursday 21-08-2008 at 10:08:22 (cest)>}
\def\stripname Time-stamp: <#1 #2>{#2}
\edef\filedate{\expandafter\stripname\timestamp}
\DeclareMathSymbol\restr\mathbin{AMSa}{"16}
\DeclareMathSymbol\le   \mathrel{AMSa}{"36}  
\theoremstyle{plain}
\newtheorem{theorem}{Theorem}[section]
\newtheorem{lemma}[theorem]{Lemma}
\newtheorem{corollary}[theorem]{Corollary}
\newtheorem{proposition}[theorem]{Proposition}
\theoremstyle{definition}
\newtheorem*{KM}{Property (KM)}
\theoremstyle{remark}
\newtheorem{remark}[theorem]{Remark}
\numberwithin{equation}{section}
\newcommand\cl{\operatorname{cl}}
\begin{document}
\title[On hereditarily indecomposable compacta]
       {On hereditarily indecomposable compacta and factorization of maps}

\author[K. P. Hart and E. Pol]{Klaas Pieter Hart and El\.{z}bieta Pol}

\address[Klaas Pieter Hart]
        {Faculty of Electrical Engineering, Mathematics and 
         Computer Science\\
         TU~Delft\\
         Postbus 5031\\
         2600~GA~~Delft\\
         the Netherlands}
\email{k.p.hart@tudelft.nl}
\urladdr{http://fa.its.tudelft.nl/\~{}hart}

\address[El\.{z}bieta Pol]
        {Institute of Mathematics\\
         University of Warsaw\\
         Banacha 2\\
         02-197 Warszawa\\
         Poland}
\email{pol@mimuw.edu.pl}
\thanks{The second author was partially supported by 
        MNiSW Grant Nr N201 034 31/2717}

\subjclass[2000]{Primary 54F15. Secondary:  54F45 03C98}

\keywords{hereditarily indecomposable compacta, 
          \v{C}ech-Stone compactification, factorization, 
          L\"owenheim-Skolem theorem}

\begin{abstract}
We prove a general factorization theorem for maps with hereditarily
indecomposable fibers and apply it to reprove a theorem of 
Ma\'ckoviak on the existence of universal hereditarily indecomposable
continua.
\end{abstract}

\date{\filedate}

\maketitle

\section{Introduction}

All spaces are assumed to be normal. 
By a map we mean a continuous function. 
We say that a compactum~$X$ is \emph{hereditarily indecomposable} 
if for every two intersecting continua in~$X$ one is contained in the other. 
The main result of this note is the following theorem.

\begin{theorem}\label{thm.1.1} 
Let $f : X \to Y$ be a perfect map with hereditarily indecomposable fibers 
from a separable metrizable space~$X$ onto a zero-dimensional separable 
metrizable space~$Y$. 
Then there are a hereditarily indecomposable metrizable 
compactification~$X^{\star }$ of\/~$X$ with $\dim X^*= \dim X$ and 
a zero-dimensional metrizable compactification~$Y^*$ of\/~$Y$ such 
that $f$~can be extended to a map $f^*: X^* \to Y^*$. 
\end{theorem}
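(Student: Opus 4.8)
The plan is to build the two compactifications simultaneously by a countable closing-off (L\"owenheim--Skolem) argument, generating $X^*$ from a carefully chosen countable family $\mathcal F\subseteq C^*(X)$ of maps into $[0,1]$ and realizing $Y^*$ as the induced quotient of $X^*$. Recall that a countable family $\mathcal F$ separating points and closed sets of $X$ determines a metrizable compactification $X^*=\cl\,e_{\mathcal F}(X)\subseteq[0,1]^{\mathcal F}$, where $e_{\mathcal F}(x)=(g(x))_{g\in\mathcal F}$; every closed set and every separator in $X^*$ is then, up to arbitrarily small error, cylindrical, i.e.\ defined from finitely many coordinates at rational levels. First I would fix a countable clopen base $\mathcal B$ of the zero-dimensional space $Y$ and put all the $\{0,1\}$-valued maps $\chi_U\circ f$ ($U\in\mathcal B$) into the starting family $\mathcal F_0$, together with enough maps to separate points and closed sets of $X$. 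The subfamily coming from $\mathcal B$ generates a zero-dimensional metrizable compactification $Y^*$ of $Y$, and because these coordinates factor through $f$, the evaluation $e_{\mathcal F}$ makes $f$ extend to a map $f^*:X^*\to Y^*$; this secures the ``$Y^*$ zero-dimensional'' and ``$f^*$ exists'' parts at once.

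The engine of the construction is the requirement that $X^*$ satisfy the Krasinkiewicz--Minc separation property (KM): for every two pairs of disjoint closed sets there are \emph{disjoint} separators. To force this in the limit I would, at each stage, enumerate the countably many cylindrical configurations of two disjoint closed pairs $(A_0,A_1),(B_0,B_1)$ definable from the current $\mathcal F_n$, and for each adjoin maps whose level sets realize disjoint separators of the two pairs in $X$; closing off under countably many such demands (and under a countable algebra of operations) yields $\mathcal F=\bigcup_n\mathcal F_n$. A routine approximation then shows that an \emph{arbitrary} pair of disjoint closed pairs in $X^*$, being approximable by cylindrical ones, also admits disjoint separators, so $X^*$ has (KM) and is hereditarily indecomposable.

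The crux---and the step I expect to be the main obstacle---is the existence, for each configuration, of disjoint separators in $X$ \emph{built fiberwise and compatibly with a clopen partition of the base}, so that they survive to the compactification. Here is where the hypotheses combine: for each $y\in Y$ the fiber $f^{-1}(y)$ is hereditarily indecomposable, so (KM) supplies disjoint separators of the traces $(A_0,A_1)$ and $(B_0,B_1)$ inside $f^{-1}(y)$. Using that $f$ is perfect (the tube lemma for closed maps with compact fibers) and that $Y$ is zero-dimensional, I would thicken these fiberwise separators to disjoint separators over a \emph{clopen} tube $f^{-1}(U_y)$ with $U_y\in\mathcal B$. As $Y$ is zero-dimensional, separable metrizable, hence Lindel\"of, the clopen cover $\{U_y\}$ refines to a countable clopen \emph{partition} $\{W_k\}$; taking over each $f^{-1}(W_k)$ the separators inherited from the corresponding fiber and forming $L_i=\bigcup_k\bigl(S_i^{k}\cap f^{-1}(W_k)\bigr)$ produces global separators $L_0,L_1$ that are disjoint, because within each clopen piece they are disjoint and distinct pieces lie in disjoint clopen tubes. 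The delicate points are the fiber-to-tube thickening (keeping the two separators disjoint while extending the four defining open sets off the fiber) and checking that the resulting $L_i$, realized as rational level sets of maps added to $\mathcal F$, genuinely extend to separators in $X^*$; the clopen-over-base form is exactly what makes this extension clean, and it is also why passing to $\beta X$ directly fails---over the \v{C}ech--Stone remainder of $Y$ the fiberwise control is lost.

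Finally, for $\dim X^*=\dim X$ I would run the same closing-off with an extra family of demands. Since $X$ is dense in the metrizable space $X^*$, monotonicity of dimension for separable metrizable spaces gives $\dim X\le\dim X^*$. For the reverse, writing $n=\dim X$, I would at each stage also adjoin maps witnessing the separator characterization of covering dimension (every $n+1$ pairs of disjoint closed sets admit separators with empty common intersection), which exist in $X$ because $\dim X=n$; the limit space $X^*$ then satisfies the same characterization, so $\dim X^*\le n$. Combining the two inequalities yields $\dim X^*=\dim X$ and completes the proof.
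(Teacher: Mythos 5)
Your construction is correct in outline but reaches the goal by a genuinely different route from the paper. You run a classical Marde\v{s}i\'c-style closing-off on a countable family $\mathcal F\subseteq C^*(X)$, adding witnesses for countably many cylindrical configurations and passing to $X^*=\cl e_{\mathcal F}[X]$; the paper instead first shows that $X$ itself has Property~(KM) (its Theorem~3.1), transfers (KM) to $\beta X$ via the Wallman representation of the closed-set lattice (its Theorem~2.1), and then applies a L\"owenheim--Skolem factorization through an elementary sublattice to come back down to a second-countable $X^*$ sitting between $\beta X$ and the graph compactification $\cl G(f)$ (its Theorem~2.2). The step you rightly identify as the crux --- swelling a fold of the fiber $f^{-1}(y)$ to open sets, cutting down to a clopen tube $f^{-1}[O_y]$ using perfectness, and gluing the local folds over a disjoint clopen refinement of $\{O_y\}$ --- is exactly the paper's Theorem~3.1, so the two proofs share their combinatorial heart. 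What the paper's packaging buys is that lattice relations among generators are literally preserved in the Wallman space ($\bar a\cap\bar b=\overline{a\wedge b}$), so one never has to verify that witnesses ``survive to the compactification''; in your version that verification (adjoining Urysohn-type coordinates certifying $X_0\cap X_2=\emptyset$, $X_0\cap X_1\subseteq V$, $X_1\cap X_2\subseteq U$ so that the six conditions persist after taking closures in $[0,1]^{\mathcal F}$) is exactly the part you wave at and would have to write out. Your parenthetical reason for avoiding $\beta X$ is also off target: the paper does pass through $\beta X$, and no fiberwise control over the remainder of $Y$ is needed, because (KM) is established for $X$ itself and, being a finitary lattice-theoretic property, transfers to $\beta X$ automatically.

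One genuine slip to repair: your gloss of (KM) as ``for every two pairs of disjoint closed sets there are disjoint separators'' is not a characterization of hereditary indecomposability --- it is precisely the Eilenberg--Otto partition characterization of $\dim X\le1$, so for instance the arc satisfies it. The property you must close off under is the actual Krasinkiewicz--Minc property: a quadruple $\langle C,D,U,V\rangle$ and a three-element fold $\langle X_0,X_1,X_2\rangle$ subject to the six stated conditions, the crookedness residing in $X_0\cap X_1\subseteq V$ and $X_1\cap X_2\subseteq U$. Since this is equally finitary, the correction costs nothing structurally, but with the property as you stated it the limit space need not be hereditarily indecomposable.
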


Let us note that this result, combined with a pseudosuspension
method, yields a theorem of Ma\'{c}kowiak~\cite{M2} on the
existence of universal $n$-dimensional hereditarily
indecomposable continua. 
This theorem was obtained by Ma\'{c}kowiak by a quite different method 
based on a subtle use of inverse limits. 
We comment on this in Corollary~\ref{cor.4.1}.

Rather unexpectedly, our proof uses, in an essential way, large nonmetrizable 
compactifications and a considerable strenghtening of Marde\v{s}i\'c's
Factorization Theorem (see \cite{E}*{Theorem~3.4.1}).
This strengthening is a dual version of the L\"owenheim-Skolem theorem from
model theory; it appears as Theorem~3.1 in~\cite{MR1785837} and it
was put to good use in~\cite{HvMP} and~\cite{vdS}.
In Section~\ref{sec.2} we explain some general facts concerning this 
technique and in section~\ref{sec.3} we show how our theorem follows from 
these results. 
Among other consequences of this technique is the following theorem, proved in
section~\ref{sec.3}.

\begin{theorem}\label{thm.1.2} 
For every cardinal~$\tau$ and $n \in \{0,1,\ldots , \infty \}$ 
there exists a hereditarily indecomposable compactum $X(n,\tau)$ of 
weight~$\tau$ and dimension~$n$ that contains a copy of every hereditarily
indecomposable compactum of weight not more than~$\tau$ and 
dimension at most~$n$.
\end{theorem}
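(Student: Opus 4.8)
The plan is to realize every eligible compactum as a \emph{fibre} of a single perfect map of weight~$\tau$, and then to invoke the weight-$\tau$ form of the factorization technique of Section~\ref{sec.2} (Theorem~\ref{thm.1.1} being its separable instance) to compactify the total space without raising the dimension and without destroying hereditary indecomposability. The crucial preliminary observation is that the Vietoris hyperspace $2^{I^\tau}$ of the Tychonoff cube $I^\tau$ (with $I=[0,1]$) has weight exactly~$\tau$: a base of size~$\tau$ for $I^\tau$ yields, through its finite tuples, a base of size~$\tau$ for $2^{I^\tau}$. Thus, although there may be as many as $2^\tau$ homeomorphism types of eligible compacta, they all occur inside the \emph{single} weight-$\tau$ parameter space $\mathcal{A}_n=\{A\in 2^{I^\tau}:A\text{ is hereditarily indecomposable and }\dim A\le n\}$, since every hereditarily indecomposable compactum of weight at most~$\tau$ and dimension at most~$n$ embeds in $I^\tau$ and so appears as a point of $\mathcal{A}_n$.

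First I would build the tautological family over $\mathcal{A}_n$. Let $E=\{(A,x)\in 2^{I^\tau}\times I^\tau: x\in A\}$ be the membership relation; this is closed, and the projection $E\to 2^{I^\tau}$ is perfect, being the restriction to a closed set of the projection $2^{I^\tau}\times I^\tau\to 2^{I^\tau}$ (perfect because $I^\tau$ is compact). To arrange a zero-dimensional base, choose a zero-dimensional compactum $Z_0$ of weight~$\tau$ with a continuous surjection $q_0:Z_0\to \cl\mathcal{A}_n$ (every compactum of weight~$\tau$ is such an image of a closed subspace of $\{0,1\}^\tau$), put $Z=q_0^{-1}(\mathcal{A}_n)$, and let $f:X\to Z$ be the pullback of $E$ along $q=q_0\restr Z:Z\to\mathcal{A}_n$. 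Then $f$ is perfect (a restriction, over the subspace $Z$, of the perfect map $E\to 2^{I^\tau}$), each fibre $f^{-1}(z)$ is the hereditarily indecomposable compactum $q(z)$ of dimension at most~$n$, the base $Z$ is zero-dimensional of weight~$\tau$, and $X$ has weight~$\tau$ with $\dim X\le n$ (the base is zero-dimensional and the fibres have dimension at most~$n$). Including one fibre of dimension exactly~$n$ (and, for $n=\infty$, one infinite-dimensional fibre) makes $\dim X=n$. By construction every eligible $K$ reappears as some fibre $f^{-1}(z)\cong q(z)$, so $X$ already contains a copy of each.

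Finally I would apply the weight-$\tau$ strengthening of Theorem~\ref{thm.1.1} to the perfect map $f:X\to Z$ with hereditarily indecomposable fibres over the zero-dimensional weight-$\tau$ base~$Z$. This produces a hereditarily indecomposable compactification $X^{\star}$ of~$X$ with $\dim X^{\star}=\dim X=n$ and weight~$\tau$, together with an extension of $f$ to a map onto a zero-dimensional compactification of~$Z$. Since each eligible $K$ is already a closed copy inside $X\subseteq X^{\star}$, the compactum $X(n,\tau):=X^{\star}$ contains a copy of every hereditarily indecomposable compactum of weight at most~$\tau$ and dimension at most~$n$; its weight is exactly~$\tau$ because it contains eligible compacta of weight~$\tau$ (for instance $\{0,1\}^\tau$, which is hereditarily indecomposable of dimension~$0$).

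The main obstacle is this last step: producing a hereditarily indecomposable compactification of weight exactly~$\tau$ while holding the dimension at~$n$. This is precisely where the large nonmetrizable compactifications and the dual L\"owenheim--Skolem theorem enter, reflecting hereditary indecomposability together with the dimension equality from an auxiliary large compactification down to a compactum of weight~$\tau$. A secondary point that must be handled with care is that \textbf{$\dim\le n$ is not a closed condition} in $2^{I^\tau}$ (a convergent sequence of arcs can fill a square), so one cannot simply compactify the parameter space; working over the non-closed family $\mathcal{A}_n$ and using the fact that perfect maps restrict to perfect maps over subspaces is exactly what keeps every fibre of dimension at most~$n$.
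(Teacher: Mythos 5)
Your parametrized‑family construction is the right picture for $\tau=\aleph_0$ (it is essentially how the paper proves Corollary~\ref{cor.4.1}), but for uncountable $\tau$ the step you yourself flag as the main obstacle is a genuine gap rather than a routine appeal to the machinery. The ``weight‑$\tau$ strengthening of Theorem~\ref{thm.1.1}'' that you invoke is not proved in the paper, and the natural attempt to prove it founders on the hypotheses of Theorem~\ref{thm.3.1}: to conclude that the total space $X$ has Property~(KM) one needs the base of the perfect map to be strongly zero-dimensional \emph{and paracompact}, and (by the paper's blanket convention, which is genuinely used through the swelling theorem and through realizing $\beta X$ as a Wallman representation) the total space must be normal. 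When $\tau$ is uncountable, $\mathcal{A}_n$ is not closed in $2^{I^\tau}$, so $Z=q_0^{-1}(\mathcal{A}_n)$ is merely some non-compact subspace of a nonmetrizable zero-dimensional compactum; such subspaces need not be paracompact or even normal, and $X$, being a closed subspace of $Z\times I^\tau$, inherits none of these properties automatically (normal~$\times$~compact can already fail to be normal). Your auxiliary claim that $\dim X\le n$ ``because the base is zero-dimensional and the fibres have dimension at most~$n$'' is a Hurewicz-type inequality that is likewise only available under comparable hypotheses. So the fibered approach does not transfer to arbitrary $\tau$ without substantial extra work that the proposal does not supply.

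The paper avoids all of this by discarding the topology on the parameter space. Let $\{X_s\}_{s\in S}$ be \emph{all} hereditarily indecomposable subcompacta of $I^\tau$ of dimension at most $n$ and let $X=\bigoplus_{s\in S}X_s$ be their free union. This $X$ is normal and has Property~(KM) outright (fold each clopen summand separately and take unions), so $\beta X$ is hereditarily indecomposable by Theorem~\ref{thm.2.1}; its dimension is $\sup_s\dim X_s=n$. The map $f:\beta X\to I^\tau$ extending the inclusions $i_s$ is then factored by Theorem~\ref{thm.2.2} as $h\circ g$ through a hereditarily indecomposable compactum $Z$ with $w(Z)\le\tau$ and $\dim Z=\dim X$, and since $h\circ g$ restricted to each $X_s$ equals the embedding $i_s$, the restriction $g\restr X_s$ is a continuous injection of a compactum, hence an embedding of $X_s$ into $Z$. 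No zero-dimensional base, no compactification of a non-compact parameter space, and no dimension-of-a-fibration argument are needed. If you want to salvage your write-up, replace $\mathcal{A}_n$ by the same index set carrying the discrete topology; Theorem~\ref{thm.3.1} then applies trivially and you recover the paper's proof.
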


The following property of a space $X$ was formulated by
Krasinkiewicz and Minc~\cite{KM}:

\begin{KM}
For every two disjoint closed
sets $C$ and $D$ in $X$ and disjoint open sets $U$ and $V$ in $X$
with $C \subset U$ and $D \subset V$ there exist closed sets
$X_0$, $X_1$ and $X_2$ in $X$ such that 
$X = X_0 \cup X_1 \cup X_2 $, 
$C \subset X_0$, 
$D \subset X_2$, 
$X_0\cap X_1 \subset V$, 
$X_1 \cap X_2 \subset U$ and 
$X_0\cap X_2= \emptyset$.
\end{KM}

To avoid having to write down the six conditions each time we shall
call a triple $\langle X_0,X_1,X_2\rangle$ a \emph{fold of~$X$} for
the quadruple $\langle C,D,U,V\rangle$.


\begin{theorem}[\cite{KM}] \label{thm.1.3} 
A compact space is hereditarily indecomposable 
if and only if it has Property~(KM).
\end{theorem}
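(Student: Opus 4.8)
The plan is to prove the two implications by quite different methods: the direction \emph{Property~(KM)~$\Rightarrow$~hereditary indecomposability} needs nothing beyond connectedness, while the converse requires the nested structure of subcontinua.

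For the first implication I would argue by contraposition. Suppose $X$ is not hereditarily indecomposable, so there are continua $K,L\subseteq X$ with $K\cap L\ne\emptyset$ while $K\not\subseteq L$ and $L\not\subseteq K$; fix $p\in K\setminus L$ and $q\in L\setminus K$. As $K,L$ are closed and $p\ne q$, I can choose disjoint open sets $U\ni p$ and $V\ni q$ with $U\cap L=\emptyset$ and $V\cap K=\emptyset$. Put $C=\{p\}$, $D=\{q\}$ and let $\langle X_0,X_1,X_2\rangle$ be a fold for $\langle C,D,U,V\rangle$. Then $K$ cannot meet $X_2$: otherwise $K$ would meet the two closed sets $X_0$ and $X_1\cup X_2$ covering it, hence by connectedness meet $X_0\cap(X_1\cup X_2)=X_0\cap X_1\subseteq V$, contradicting $K\cap V=\emptyset$. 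So $K\subseteq X_0\cup X_1$; since $X_0\cap X_1\subseteq V$ is disjoint from $K$, the connected set $K$ is split by the relatively closed disjoint pieces $K\cap X_0\ni p$ and $K\cap X_1$, forcing $K\subseteq X_0$. The symmetric argument, now using $L\cap U=\emptyset$ and $X_1\cap X_2\subseteq U$, gives $L\subseteq X_2$. But then $\emptyset\ne K\cap L\subseteq X_0\cap X_2=\emptyset$, a contradiction.

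For the converse I would build the fold from minimal subcontinua. Given disjoint closed $C,D$ and disjoint open $U\supseteq C$, $V\supseteq D$, recall that in a hereditarily indecomposable compactum any two subcontinua through a common point are comparable, so the subcontinua containing a fixed point form a chain under inclusion. (If $X$ is disconnected I would first split it along clopen sets, so I may assume $X$ connected, whence every point lies on a continuum meeting $C$.) Using compactness --- a chain of continua has a continuum as its intersection, which meets a closed set whenever every member does --- I can define, for each $x$, the \emph{smallest} subcontinuum $\gamma_C(x)$ containing $x$ and meeting $C$, and likewise $\gamma_D(x)$; both contain $x$, hence are comparable. I then set
\[
X_0=\{x:\gamma_C(x)\cap D=\emptyset\},\qquad X_2=\{x:\gamma_D(x)\cap C=\emptyset\},
\]
and $X_1=X\setminus(\operatorname{int}X_0\cup\operatorname{int}X_2)$. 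Clearly $C\subseteq X_0$ and $D\subseteq X_2$, since $\gamma_C(x)=\{x\}$ for $x\in C$. The crucial \emph{disjointness} $X_0\cap X_2=\emptyset$ falls out of comparability: if $x\in X_0\cap X_2$ then $\gamma_C(x)$ meets $C$ but not $D$ and $\gamma_D(x)$ meets $D$ but not $C$; if $\gamma_C(x)\subseteq\gamma_D(x)$ the former's meeting $C$ forces $\gamma_D(x)$ to meet $C$, and if $\gamma_D(x)\subseteq\gamma_C(x)$ the latter's meeting $D$ forces $\gamma_C(x)$ to meet $D$ --- either way a contradiction. Once $X_0,X_2$ are known to be closed, this yields a partition with $X_0\cap X_1=\partial X_0$ and $X_1\cap X_2=\partial X_2$.

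What remains --- and what I expect to be the main obstacle --- is precisely the closedness of $X_0,X_2$ together with the two seam conditions $\partial X_0\subseteq V$ and $\partial X_2\subseteq U$. This is where hereditary indecomposability and compactness must be used quantitatively: one needs the semicontinuity of $x\mapsto\gamma_C(x)$ (if $x_n\to x$, any Hausdorff limit of the $\gamma_C(x_n)$ is a continuum through $x$ meeting $C$, hence contains $\gamma_C(x)$) to control how the defining condition can fail in a limit, and one must show that at a boundary point of $X_0$ the minimal continuum is ``just touching'' $D$, so that the failure is confined to the neighbourhood $V$. I expect the clean definition above to need a small adjustment near the boundaries --- trimming $X_0$ and $X_2$ against $U$ and $V$ --- to drive the seams into the prescribed neighbourhoods; making this localization rigorous is the technical heart of the theorem, and the point at which the crookedness of hereditarily indecomposable compacta is genuinely exploited.
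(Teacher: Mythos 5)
First, note that the paper itself offers no proof of this theorem: it is quoted from Krasinkiewicz and Minc, so there is no internal argument to compare yours against. Your first implication (Property~(KM) implies hereditary indecomposability) is correct and complete: the choice of $C=\{p\}$, $D=\{q\}$ and of the open sets $U$, $V$ is legitimate in a compact Hausdorff (hence normal) space, and the connectedness argument forcing $K\subseteq X_0$ and $L\subseteq X_2$ is sound and yields the contradiction with $X_0\cap X_2=\emptyset$. This is the standard and easy half.

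The converse, however, is only a programme with an acknowledged hole, and the hole is not a routine verification but the entire substance of the theorem. Two concrete problems. First, the sets $X_0=\{x:\gamma_C(x)\cap D=\emptyset\}$ and $X_2$ need not be closed: your semicontinuity remark only gives $\gamma_C(x)\subseteq K$ where $K$ is a Hausdorff limit of the $\gamma_C(x_n)$, and a limit of continua each disjoint from $D$ can perfectly well meet $D$ (the continua may approach $D$ without touching it); so nothing you have said prevents $\gamma_C(x)$ from meeting $D$ while every $\gamma_C(x_n)$ misses it. Second, and more fundamentally, the open sets $U$ and $V$ do not occur anywhere in your definitions of $X_0$, $X_1$, $X_2$, so there is no mechanism that could possibly force the seams $X_0\cap X_1$ and $X_1\cap X_2$ into $V$ and $U$ respectively; the ``trimming against $U$ and $V$'' that you defer is precisely where the crookedness of hereditarily indecomposable continua has to be exploited, and the arguments that work build $U$ and $V$ into the definition of the three pieces from the outset rather than adjusting afterwards. (A lesser issue: your reduction to connected $X$ via ``splitting along clopen sets'' also needs care, since a compactum may have uncountably many components and these need not be open.) As it stands, the implication from hereditary indecomposability to Property~(KM) is not proved; you have identified where the difficulty lies but not overcome it.
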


\section{A factorization method}
\label{sec.2}

The factorization method alluded to in the Introduction is based on a mix
of Model Theory and Set-Theoretic Topology.
It works best in the realm of compact Hausdorff spaces, as will become clear
shortly.

The first ingredient is Wallman's representation theorem, \cite{Wallman},
for distributive lattices: if $L$ is such a lattice then the set $wL$ of
ultrafilters on~$L$ carries a natural compact $T_1$-topology.
This topology has the family $\{\bar a:a\in L\}$ as a base for the closed
sets, where $\bar a=\{u\in wL:a\in u\}$.

If $X$ is compact and $T_1$ and $L$ is the family of closed subsets of~$X$,
with union and intersection as its lattice operations then 
$x\mapsto u_x=\{a\in L:x\in a\}$ is a homeomorphism from~$X$ onto~$wL$;
this remains true if $L$~is a base for the closed sets of~$X$ that is closed
under unions and intersections.
See, e.g., \cite{Aarts} for a short introduction to Wallman representations.

For a normal space~$X$ one can obtain the \v{C}ech-Stone compactification,
$\beta X$, as the Wallman representation of the lattice of closed sets 
of~$X$.
This is the key to the next theorem.

\begin{theorem}\label{thm.2.1} 
If $X$ has Property~(KM) then so does its \v{C}ech-Stone 
compactification~$\beta X$ and, in particular, 
$\beta X$~is hereditarily indecomposable.
\end{theorem}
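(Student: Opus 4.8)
The plan is to prove that $\beta X$ has Property~(KM) directly from the definition, using the fact that closed sets in $\beta X$ are intersections of closures of closed sets of $X$ (or more precisely that $X$ is dense in $\beta X$ and that closed sets of $X$ correspond to basic closed sets via the Wallman representation). Given a quadruple $\langle C, D, U, V\rangle$ in $\beta X$ satisfying the hypotheses of (KM), I need to produce a fold $\langle X_0^*, X_1^*, X_2^*\rangle$ of $\beta X$. The natural strategy is to pull everything back to $X$, apply Property~(KM) of $X$ to get a fold there, and then push the result back up to $\beta X$ by taking closures.

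Let me think about the order of steps. First, given the disjoint closed sets $C, D \subseteq \beta X$ sitting inside disjoint open sets $U, V \subseteq \beta X$, I would separate them more robustly: since $\beta X$ is normal (indeed compact Hausdorff), I can shrink to find closed neighborhoods and, crucially, find closed sets of $X$ that "trace out" the relevant data. The key technical point is that the closed sets $C$ and $D$ of $\beta X$, being compact and disjoint, are contained in disjoint cozero sets, and these are determined by closed subsets of $X$: I would choose closed sets $C', D' \subseteq X$ and open sets $U', V' \subseteq X$ whose closures in $\beta X$ still separate $C$ from $D$ in the required way, i.e.\ $C \subseteq \cl_{\beta X} C'$, etc., with $C' \subseteq U'$, $D' \subseteq V'$, $C' \cap D' = \emptyset$, and $\cl_{\beta X} U' \cap \cl_{\beta X} V'$ small.

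Next I would apply Property~(KM) of $X$ to the quadruple $\langle C', D', U', V'\rangle$ to obtain a fold $\langle X_0, X_1, X_2\rangle$ of $X$. Then I would set $X_i^* = \cl_{\beta X} X_i$ for $i = 0,1,2$ and verify the six conditions of a fold for $\langle C, D, U, V\rangle$ in $\beta X$. The covering condition $\beta X = X_0^* \cup X_1^* \cup X_2^*$ follows because $X = X_0 \cup X_1 \cup X_2$ is dense and the union of three closed sets is closed. The containments $C \subseteq X_0^*$ and $D \subseteq X_2^*$ follow from the choices above together with $C' \subseteq X_0$ and $D' \subseteq X_2$. The delicate conditions are the three intersection conditions, for example $X_0^* \cap X_1^* \subseteq V$ and $X_0^* \cap X_2^* = \emptyset$.

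The hard part will be controlling these intersections of closures, because in general $\cl_{\beta X}(A \cap B)$ can be strictly smaller than $\cl_{\beta X} A \cap \cl_{\beta X} B$, so the inclusion $X_0 \cap X_1 \subseteq V'$ does not immediately give $X_0^* \cap X_1^* \subseteq V$. To handle this I would exploit normality of $X$ and the defining property of $\beta X$: for disjoint closed sets of $X$ their closures in $\beta X$ remain disjoint, and for a closed set contained in an open set one can insert a cozero set whose closure is still inside. Concretely, to get $X_0^* \cap X_2^* = \emptyset$ I would arrange (shrinking if necessary, or using that in $\beta X$ disjoint closed sets of $X$ have disjoint closures since $X$ is normal) that $X_0 \cap X_2 = \emptyset$ is promoted to disjoint closures. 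For $X_0^* \cap X_1^* \subseteq V$ I would choose $V$ at the outset to be an open set of $\beta X$ whose trace and closure behavior is compatible — that is, I would run the separation in the first step carefully enough that the relevant pieces $X_0 \cap X_1$ and $X_1 \cap X_2$ sit inside cozero sets of $X$ whose $\beta X$-closures are contained in $V$ and $U$ respectively. This careful bookkeeping of open sets, making sure every inclusion into an open set leaves room for a closure, is where the real work lies; the rest is formal verification against the six conditions of the fold and an appeal to Theorem~\ref{thm.1.3} to conclude hereditary indecomposability.
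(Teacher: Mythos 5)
Your proposal is correct and follows essentially the same route as the paper: shrink the given quadruple to one determined by closed and open subsets of $X$ (using compactness of $C$ and $D$), apply Property~(KM) down in $X$, and transport the resulting fold to $\beta X$ by taking closures. The one point you hedge on --- that for closed subsets of a normal space the operation $A\mapsto\cl_{\beta X}A$ commutes with finite intersections, so that $\cl_{\beta X}X_0\cap\cl_{\beta X}X_1=\cl_{\beta X}(X_0\cap X_1)$ and the fold conditions survive --- is exactly what the paper's Wallman-representation framing supplies for free, since $\cl_{\beta X}A$ is the set of ultrafilters of closed sets containing $A$ and ultrafilters give $\overline{A}\cap\overline{B}=\overline{A\cap B}$.
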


\begin{proof}
To begin: it should be clear that Property~(KM) can be 
(re)for\-mu\-lated in terms 
of closed sets only and that it is a finitary lattice-theoretic property; 
one can express it as an implication involving seven variables.
Thus if $X$~has Property~(KM) then the canonical base, $\mathcal{B}$, for 
the closed sets of~$\beta X$ satisfies this implication.
This does not automatically imply that $\beta X$ has Property~(KM), because
that means that the full family of closed sets of~$\beta X$ satisfies the
lattice-theoretic formula.
However, in the present case one can start with arbitrary 
$C$, $D$, $U$ and~$V$ and use compactness and the fact that $\mathcal{B}$
is closed under finite unions and intersections to 
find~$C'$, $D'$, $U'$ and~$V'$ such 
that $C\subseteq C'\subseteq U'\subseteq U$ and  
$D\subseteq D'\subseteq V'\subseteq V$, and such that 
$C'$, $D'$, $\beta X\setminus U'$ and $\beta X\setminus V'$ belong 
to~$\mathcal{B}$.
One can then find a fold $\langle X_0,X_1,X_2\rangle$ for 
$\langle C',D',U',V'\rangle$ in~$\mathcal{B}$  
and this will also be a fold for~$\langle C,D,U,V\rangle$.
\end{proof}

The second ingredient is the use of notions from Model Theory,
especially elementary substructures and the L\"owenheim-Skolem theorem.
In the context of lattices elementarity is perhaps best explained in terms
of solutions to equations.
One can interpret Property~(KM) as saying that certain equations should have
solutions: the quadruple $\langle C,D,U, V\rangle$ determines six equations
and a fold  $\langle X_0,X_1,X_2\rangle$ is a solution to this system.

One calls $M$ an elementary sublattice of~$L$ if every lattice-theoretic
equation with constants from~$M$ that has a solution in~$L$ also has a 
solution in~$M$.

To illustrate its use we prove the following lemma.

\begin{lemma}\label{lemma.elem.KM}
Assume $X$ is a hereditarily indecomposable compact space and let $L$ be 
an elementary sublattice of the lattice of closed subsets of~$X$.
Then $wL$ is also hereditarily indecomposable.
\end{lemma}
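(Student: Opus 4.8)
The plan is to show that $wL$ has Property~(KM) and then invoke Theorem~\ref{thm.1.3}. Since $X$ is hereditarily indecomposable, it has Property~(KM) by Theorem~\ref{thm.1.3}. The key observation is that Property~(KM) is a finitary lattice-theoretic statement: given a quadruple $\langle C,D,U,V\rangle$, the existence of a fold $\langle X_0,X_1,X_2\rangle$ amounts to the solvability of a system of lattice equations with the complements of $U$ and $V$ (which are closed sets) serving as the remaining constants. The elementarity of $L$ in the full lattice of closed sets of $X$ is precisely the tool for transferring solvability of such systems down from the big lattice to $L$.

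The main steps would be: first, translate Property~(KM) for $wL$ into the requirement that the system of six equations determined by an arbitrary quadruple in $wL$ has a solution among the basic closed sets $\{\bar a : a \in L\}$, using the same compactness-plus-lattice-closure argument as in the proof of Theorem~\ref{thm.2.1} to reduce to a quadruple whose four constants are basic closed sets, i.e.\ of the form $\bar a$ for elements $a \in L$. Second, pull these constants back to $L$: a quadruple of basic closed sets corresponds to a quadruple $\langle c,d,u,v\rangle$ of elements of $L$. Third, since $X$ is hereditarily indecomposable, the lattice of closed subsets of $X$ satisfies Property~(KM), so the corresponding system of equations has a solution in that big lattice; by elementarity of $L$ it then has a solution $\langle x_0,x_1,x_2\rangle$ already in $L$. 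Fourth, pass this solution back up through the Wallman map $a \mapsto \bar a$, which is a lattice homomorphism preserving the relevant incidences, to obtain a fold $\langle \bar x_0, \bar x_1, \bar x_2\rangle$ for the original quadruple in $wL$.

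The step I expect to require the most care is matching the inclusions and disjointness conditions across the correspondence between elements of $L$ and basic closed sets of $wL$. The six defining conditions of a fold mix covering ($X = X_0 \cup X_1 \cup X_2$), inclusion into open sets ($X_0 \cap X_1 \subseteq V$), and disjointness ($X_0 \cap X_2 = \emptyset$); each must be recast as a lattice equation or inequality so that the Wallman representation translates it faithfully in both directions. In particular one must verify that $\overline{a \cap b} = \bar a \cap \bar b$ and $\overline{a \cup b} = \bar a \cup \bar b$ hold well enough for these conditions, and that containment of a basic closed set in the complement of another corresponds exactly to a lattice condition expressible without reference to the ambient space. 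The rest of the argument is then bookkeeping: the compactness reduction and the transfer via elementarity are routine once the conditions have been correctly phrased as a finite system of equations.
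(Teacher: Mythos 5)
Your proposal is correct and follows essentially the same route as the paper: reduce an arbitrary quadruple in $wL$ to one with constants from the base $\{\bar a:a\in L\}$ by the expansion trick of Theorem~\ref{thm.2.1}, use elementarity to transfer a fold from the full closed-set lattice of~$X$ down to~$L$, and read the result back in~$wL$ via the Wallman correspondence. The paper's proof is just a terser statement of exactly these steps, so no further comparison is needed.
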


\begin{proof}
By elementarity the lattice~$L$ satisfies Property~(KM): 
if $C$, $D$, $X\setminus U$ and $X\setminus V$ belong to~$L$ then there is 
a fold $\langle X_0,X_1,X_2\rangle$ in the full family of closed sets, 
\emph{hence} there is also such a fold in~$L$.

Next, in $wL$ the same argument as in the proof of Theorem~\ref{thm.2.1} 
applies: an arbitrary quadruple can be expanded to a quadruple from the base.
\end{proof}

The L\"owenheim-Skolem Theorem provides us with many elementary substructures:
given a lattice~$L$ and some subset~$A$ of~$L$ one can construct 
an elementary sublattice~$L_A$ of~$L$ that contains~$A$ and whose
cardinality is at most $|A|\times\aleph_0$.

\begin{theorem}[\cites{MR1785837,vdS,HvMP}]\label{thm.2.2} 
Let $f : X \to Y$ be  a continuous surjection from a hereditarily 
indecomposable compact  space onto a compact space. 
Then there are a compact space~$Z$ and continuous maps
$g:X \to Z$ and $h : Z \to Y$ such that 
$Z$~is hereditarily indecomposable, 
$\dim Z= \dim X$,
\ $w(Z)=w(Y)$ and $f=h \circ g$.
\end{theorem}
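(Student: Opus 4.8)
The plan is to realise $Z$ as the Wallman representation of a carefully chosen elementary sublattice of the closed-set lattice $L_X$ of~$X$, and to read off all four conclusions from the two tools already assembled: Lemma~\ref{lemma.elem.KM} and the L\"owenheim-Skolem theorem. First I would fix a base $\mathcal{B}_Y$ for the closed sets of~$Y$ that is closed under finite unions and intersections, contains $\emptyset$ and~$Y$, and has cardinality $w(Y)$; then $Y$ is canonically homeomorphic to $w\mathcal{B}_Y$. Pulling back along~$f$ produces a sublattice $A=\{f^{-1}[b]:b\in\mathcal{B}_Y\}$ of~$L_X$ with $|A|\le w(Y)$, and $b\mapsto f^{-1}[b]$ is a bounded lattice homomorphism $\mathcal{B}_Y\to A$. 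I would then apply the L\"owenheim-Skolem theorem to obtain an elementary sublattice $M\prec L_X$ with $A\subseteq M$ and $|M|\le|A|\times\aleph_0=w(Y)$, and set $Z=wM$.

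Three of the four conclusions are then almost immediate. Since $X$ is hereditarily indecomposable and $M\prec L_X$, Lemma~\ref{lemma.elem.KM} gives that $Z=wM$ is hereditarily indecomposable. Because $X$ is compact Hausdorff its lattice $L_X$ is normal and disjunctive, and by elementarity so are~$M$ and~$\mathcal{B}_Y$; hence $wM$ and $w\mathcal{B}_Y$ are compact Hausdorff and the contravariant Wallman functor turns the homomorphisms $\mathcal{B}_Y\xrightarrow{f^{-1}}M\hookrightarrow L_X$ into continuous maps $g:X\to Z$ and $h:Z\to Y$. The one delicate point here is that the Wallman functor is single-valued on normal lattices: the preimage of an ultrafilter under a lattice homomorphism is a prime filter, and in a normal lattice every prime filter lies below a unique maximal filter, which is exactly where compactness is used. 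Functoriality then yields $h\circ g=f$, since the composite homomorphism $\mathcal{B}_Y\to L_X$ is just~$f^{-1}$ and its Wallman dual is~$f$. Finally $w(Z)\le|M|\le w(Y)$, while $h$ is onto (as $f=h\circ g$ is), so $w(Y)\le w(Z)$; hence $w(Z)=w(Y)$.

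The substantial point, and the one I expect to be the main obstacle, is the equality $\dim Z=\dim X$: this is the Marde\v{s}i\'c ingredient, and it must be made to coexist with the (KM) reflection. For each finite~$n$ the statement $\dim\le n$ can be written as a single finitary lattice sentence: quantifying over $n+1$ pairs $\langle A_i,B_i\rangle$ of disjoint elements it asserts the existence of partitions $L_i$ (each $L_i=C_i\wedge D_i$ with $C_i\vee D_i=\top$, $A_i\wedge C_i=\bot$ and $B_i\wedge D_i=\bot$) whose meet $\bigwedge_i L_i$ equals~$\bot$. This sentence transfers in both directions between $M$ and~$L_X$ by elementarity, and between $M$ and its Wallman space $wM$ by the same base-expansion device used in the proof of Theorem~\ref{thm.2.1}: any finite configuration of closed sets in~$Z$ can be shrunk and swollen to basic sets coming from~$M$, so a solution found in~$M$ serves for~$Z$ and conversely. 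Running these equivalences shows, for every finite~$n$, that $\dim Z\le n$ iff $\dim X\le n$; taking the least such~$n$, or letting $n\to\infty$ when none exists, gives $\dim Z=\dim X$, including the value~$\infty$. The care needed lies in verifying the base-expansion step for the dimension sentence, whose auxiliary variables $C_i,D_i$ make it a little more elaborate than the single implication treated in Theorem~\ref{thm.2.1}; once that is in hand the construction of $Z$, $g$ and~$h$ is complete.
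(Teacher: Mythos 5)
Your proposal is correct and follows essentially the same route as the paper's own proof: pull back a base for the closed sets of~$Y$ along~$f$, enclose it in an elementary sublattice of the closed-set lattice of~$X$ via L\"owenheim-Skolem, take the Wallman representation as~$Z$, invoke Lemma~\ref{lemma.elem.KM} for hereditary indecomposability, and transfer $\dim\le n$ as a lattice sentence via the Theorem on Partitions together with the base-expansion trick. The extra details you supply (normality of the lattices, surjectivity of~$h$ for the weight equality) are correct elaborations of what the paper leaves implicit.
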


\begin{proof}
Let $\mathcal{B}$ be a base for the closed sets of~$Y$ of cardinality~$w(Y)$.
Via $B\mapsto f^{-1}[B]$ we can identify $\mathcal{B}$ with a sublattice
of the lattice~$\mathcal{D}$ of closed subsets of~$X$.

Apply the L\"owenheim-Skolem Theorem to find an elementary 
sublattice~$\mathcal{C}$ of~$\mathcal{D}$ that contains~$\mathcal{B}$
and has the same (infinite) cardinality as~$\mathcal{B}$; 
we let $Z=w\mathcal{C}$.
The two inclusions 
$\mathcal{B}\subseteq\mathcal{C}\subseteq\mathcal{D}$ induce
continuous surjections $g:X\to Z$ and $h:Z\to Y$ that, as one readily shows,
satisfy $f=h\circ g$.
By Lemma~\ref{lemma.elem.KM} the space~$Z$ is hereditarily indecomposable.
The same argument shows that $\dim Z=\dim X$: one can use, for example,
the Theorem on Partitions, \cite{E}*{Theorem~1.7.9}, to turn the statement
$\dim X\le n$ into an equation~$\Phi_n$.
By elementarity $\mathcal{C}$ and $\mathcal{D}$ satisfy~$\Phi_n$ for exactly
the same values of~$n$.
The expansion trick applies in this case as well so that $\dim Z\le n$ for
exactly the same values of~$n$ for which $\mathcal{C}$ satisfies~$\Phi_n$.
\end{proof}

We refer to \cite{HodgesShorterModelTheory} for basic information on Model
Theory.

\begin{remark}
The thesis \cite{vdS} contains a systematic study of properties that are
preserved by continuous maps that are induced by elementary embeddings.  
\end{remark}

\section{Proofs of the main results}
\label{sec.3}

We start with the following

\begin{theorem}\label{thm.3.1}  
Let  $f: E \to F$ be a perfect mapping from a space $E$ onto a 
strongly zero-dimensional paracompact space~$F$ such that for 
every $y \in F$ the fiber~$f^{-1}(y)$ is hereditarily indecomposable.
Then $E$~has  Property~(KM). 
\end{theorem}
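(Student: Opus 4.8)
The plan is to verify Property~(KM) by assembling a global fold out of fiberwise folds, using the zero-dimensional structure of the base~$F$ to patch them together. Fix a quadruple $\langle C,D,U,V\rangle$ as required by~(KM). For each $y\in F$ the fiber $X_y=f^{-1}(y)$ is a hereditarily indecomposable compactum, hence has Property~(KM) by Theorem~\ref{thm.1.3}; applied to the restricted quadruple $\langle C\cap X_y,\ D\cap X_y,\ U\cap X_y,\ V\cap X_y\rangle$ (which satisfies the hypotheses of~(KM) in~$X_y$) this yields a fold $\langle A_0,A_1,A_2\rangle$ of~$X_y$, so that $C\cap X_y\subseteq A_0$, $D\cap X_y\subseteq A_2$, $A_0\cap A_2=\emptyset$, $A_0\cap A_1\subseteq V$ and $A_1\cap A_2\subseteq U$. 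Note also that $E$ is paracompact, being the perfect preimage of the paracompact space~$F$. The two remaining tasks are: (a)~to spread a single fiber's fold across a whole tube $f^{-1}[W]$ over a clopen neighbourhood~$W$ of~$y$, and (b)~to glue the resulting tube folds along a clopen partition of~$F$.

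For~(a) I would first swell the fiber fold. Since $A_0$, $A_1$, $A_2$ are compact in the normal space~$E$ and satisfy $A_0\cap A_2=\emptyset$, $A_0\cap A_1\subseteq V$, $A_1\cap A_2\subseteq U$, a routine swelling argument produces open sets $G_0\supseteq A_0$, $G_1\supseteq A_1$, $G_2\supseteq A_2$ with $G_0\cap G_2=\emptyset$, $G_0\cap G_1\subseteq V$ and $G_1\cap G_2\subseteq U$; shrinking once more gives open $O_i$ with $A_i\subseteq O_i\subseteq\cl O_i\subseteq G_i$, so that $\cl O_0\cap\cl O_2=\emptyset$, $\cl O_0\cap\cl O_1\subseteq V$ and $\cl O_1\cap\cl O_2\subseteq U$. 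Because $f$ is perfect, the tube lemma lets me choose a clopen $W\ni y$ (clopen sets form a base at~$y$, as $F$ is strongly zero-dimensional) so small that the tube $T=f^{-1}[W]$ satisfies $T\subseteq O_0\cup O_1\cup O_2$, $C\cap T\subseteq O_0$ and $D\cap T\subseteq O_2$; the last two hold because $C\setminus O_0$ and $D\setminus O_2$ are closed sets missing~$X_y$, so their $f$-images are closed sets missing~$y$. Setting $B_0=T\cap\cl O_0$, $B_1=T\setminus(O_0\cup O_2)$ and $B_2=T\cap\cl O_2$ gives closed subsets of the clopen set~$T$; one checks that they cover~$T$, that $B_1\subseteq O_1$, and hence, via the three closure conditions, that $\langle B_0,B_1,B_2\rangle$ is a fold of~$T$ for the restricted quadruple.

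For~(b) the clopen neighbourhoods $\{W_y:y\in F\}$ cover~$F$; since $F$ is paracompact and strongly zero-dimensional this cover has a refinement $\{G_\alpha\}$ that is a partition of~$F$ into clopen sets, and such a partition is automatically locally finite. Choosing $y_\alpha$ with $G_\alpha\subseteq W_{y_\alpha}$ and restricting the fold over $f^{-1}[W_{y_\alpha}]$ to the clopen tube $T_\alpha=f^{-1}[G_\alpha]$, I obtain folds $\langle X_0^\alpha,X_1^\alpha,X_2^\alpha\rangle$ of the pairwise disjoint clopen tubes~$T_\alpha$, whose union is~$E$. Finally put $X_i=\bigcup_\alpha X_i^\alpha$ for $i=0,1,2$. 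Local finiteness of $\{T_\alpha\}$ makes each $X_i$ closed; disjointness of the tubes kills all cross terms, so $X_0\cap X_2=\emptyset$ and the overlap conditions $X_0\cap X_1\subseteq V$, $X_1\cap X_2\subseteq U$ reduce to the corresponding conditions on each~$T_\alpha$; coverage of~$E$ and the inclusions $C\subseteq X_0$, $D\subseteq X_2$ are verified tube by tube. Thus $\langle X_0,X_1,X_2\rangle$ is a fold for $\langle C,D,U,V\rangle$, and $E$ has Property~(KM).

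The main obstacle is step~(a): realizing one fiber's fold over an entire tube. The delicate point is the simultaneous swelling of the three compact sets $A_0$, $A_1$, $A_2$ to open neighbourhoods preserving all of $A_0\cap A_2=\emptyset$, $A_0\cap A_1\subseteq V$ and $A_1\cap A_2\subseteq U$ at once, together with shrinking the tube enough that $C$ and $D$ remain on the correct sides; the re-closing $B_0=T\cap\cl O_0$, $B_1=T\setminus(O_0\cup O_2)$, $B_2=T\cap\cl O_2$ is what converts the open data back into a genuine fold. By contrast, the patching in~(b) is essentially bookkeeping, its one real ingredient being that a paracompact strongly zero-dimensional base admits clopen partitions refining any cover; this is exactly what lets the independent fiberwise constructions be combined without interference.
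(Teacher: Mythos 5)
Your proof is correct and follows essentially the same route as the paper's: fiberwise folds via Theorem~\ref{thm.1.3}, a swelling of the fiber fold to open sets with the intersection conditions preserved, a clopen tube obtained from perfectness of~$f$ and strong zero-dimensionality of~$F$, and gluing along a disjoint clopen refinement. The only cosmetic difference is that the paper swells $C\cup A_0$ and $D\cup A_2$ together with the complements of $U$ and~$V$ (so that $C$ and $D$ land in the right pieces automatically), whereas you shrink the tube to keep $C$ and $D$ on the correct sides.
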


\begin{proof} 
Let $C$ and~$D$ be disjoint closed subsets of $E$ and let $U$ and~$V$
disjoint open subsets of $E$ around $C$ and $D$ respectively.

Let us fix $y \in F$. 
We shall find a (clopen) neighbourhood~$O_y$ of~$y$ and a fold 
of~$f^{-1}[O_y]$ for $\langle C\cap f^{-1}[O_y],D\cap f^{-1}[O_y],U,V\rangle$.
Since $f^{-1}(y)$ is compact and hereditarily indecomposable, 
by Theorem~\ref{thm.1.3} it has Property~(KM) and hence there exists a 
fold $\langle X_0,X_1,X_2\rangle$ of~$f^{-1}(y)$ for
$\langle C\cap f^{-1}(y),D\cap f^{-1}(y),U,V\rangle$.

Apply \cite{E}*{Theorem~3.1.1} to find a sequence
$B=\langle W_0,W_1, W_2, O_U, U_V\rangle$ of open sets 
such that their closures form a swelling of the 
sequence~$A=\langle C\cup X_0,X_1,D\cup X_2,X\setminus U,X\setminus V\rangle$,
which means that each term of~$A$ is a subset of the corresponding term~$B$ 
and whenever $I$ is such that $\bigcap_{i\in I}A_i=\emptyset$ then
$\bigcap_{i\in I}\cl B_i=\emptyset$.
Specifically this means that 
\begin{enumerate}
\item $f^{-1}(y)\subseteq W_0\cup W_1\cup W_2$;
\item $\cl W_0\cap\cl W_1\subseteq V$;
\item $\cl W_1\cap\cl W_2\subseteq U$;
\item $\cl W_0\cap\cl W_2=\emptyset$.
\end{enumerate}
As the map~$f$ is perfect and the space~$F$ is zero-dimensional we can find
a clopen neighbourhood~$O_y$ of~$y$ such that 
$f^{-1}[O_y]\subseteq W_0\cup W_1\cup W_2$.
It follows that $\langle\cl W_0,\cl W_1,\cl W_2\rangle$ is a fold 
of~$f^{-1}[O_y]$ for
$\langle C\cap f^{-1}[O_y], D\cap f^{-1}[O_y],U,V\rangle$.

By strong zero-dimensionality and paracompactness we can find a disjoint
clopen refinement~$\mathcal{O}$ of~$\{O_y:y\in F\}$; it is then a routine
matter to combine the `local' folds into one `global' fold of~$E$ 
for~$\langle C,D,U,V\rangle$.
\end{proof}

We are now ready to prove the first main result.

\begin{proof}[Proof of Theorem~\ref{thm.1.1}]
To begin we construct a zero-dimensional compactification~$Y^*$ of~$Y$,
a compactification~$X_1$ of~$X$ and a continuous extension $f_1:X_1\to Y^*$.

One way of doing this is by assuming that $X$~is embedded in the Hilbert 
cube~$I^{\aleph_0}$, that $Y$~is embedded in the Cantor set~$\{0,1\}^{\aleph_0}$
and then to identify $X$ with the graph of~$f$, i.e.,
$X$~is identified with
$G(f)=\bigl\{\bigl(x,f(x)\bigr):x\in X\bigr\}
     \subseteq I^{\aleph_0}\times\{0,1\}^{\aleph_0}$
via $x\mapsto\bigl(x,f(x)\bigr)$.
After this identification $f$~is simply $\pi_2\restr G(f)$, where $\pi_2$~is
the projection onto the second factor of the product;
we can then let $X_1=\cl G(f)$ (in the product) and $Y^*=\cl Y$ 
(in the Cantor set), the desired extension~$f_1$ of~$f$ then is 
$\pi_2\restr X_1$.

Next let $j:\beta X\to X_1$ be the natural map 
(the extension of the inclusion of~$X$ into~$X_1$).
By Theorem~\ref{thm.3.1} $X$~has Property~(KM) so by Theorem~\ref{thm.2.1} 
$\beta X$~is hereditarily indecomposable.
Apply Theorem~\ref{thm.2.2} to obtain a factorization of~$j$ consisting
of maps $g:\beta X\to X^*$ and $h:X^*\to X_1$ in which $X^*$~is hereditarily 
indecomposable, second-countable and satisfies $\dim X^*=\dim \beta X=\dim X$. 
Then $X^*$ is a metrizable compactification of~$X$ as $g\restr X$~is a 
homeomorphism.
It remains to set $f^*=f_1\circ h$.
\end{proof}

Let us note that since $f$ is perfect and $X^*$ is a compactification of~$X$,
the extension $f^*$ satisfies $(f^*)^{-1}(y)=f^{-1}(y)$ for $y \in Y$.

To get universal hereditarily indecomposable compacta we use the
the factorization method again.

\begin{proof}[Proof of Theorem~\ref{thm.1.2}] 
Let $\{X_{s}\}_{s\in S}$ be the family of all compact hereditarily 
indecomposable subspaces of the Tychonoff cube~$I^{\tau}$ whose dimension is 
not larger than~$n$, and let $i_{s}: X_{s}\to I^{\tau}$ be the inclusion. 
Let $X = \bigoplus _{s \in S}X_{s}$ be the free union of the~$X_{s}$'s and 
let $i : X \to I^{\tau}$ be defined by $i(x)=i_{s}(x)$ for $x \in X_{s}$. 
Let $f : \beta X \to I^{\tau}$ be the extension of~$i$ over~$\beta X$. 
Obviously, $X$~has Property~(KM), so by Theorem~\ref{thm.2.1} 
$\beta X $~is hereditarily indecomposable. 
By Theorem~\ref{thm.2.2} $f$~can be factored as $h \circ g$,  
where $g:X \to Z$ and $h : Z \to Y$ and where $Z$~is hereditarily 
indecomposable,  $w(Z)\le\tau$ and $\dim Z = \dim X$. 
We can take $X(n,\tau)=Z$.
\end{proof}

\section{Corollaries and Remarks}
\label{sec.4}

Let us note that as a corollary to either Theorem~\ref{thm.1.1} or
Theorem~\ref{thm.1.2}
one can obtain the following theorem of Ma\'{c}kowiak \cite{M2}.

\begin{corollary}\label{cor.4.1} 
For every $n \in \{1,2,\ldots \infty \}$ there exists a
hereditarily indecomposable metric continuum $Z_{n}$ of dimension
$ n$ containing a copy of every hereditarily indecomposable metric
continuum of dimension at most~$n$.
\end{corollary}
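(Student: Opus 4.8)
The plan is to deduce the Corollary from Theorem~\ref{thm.1.2} by upgrading the universal \emph{compactum} produced there to a universal \emph{continuum}. First I would apply Theorem~\ref{thm.1.2} with $\tau=\aleph_0$ to obtain a hereditarily indecomposable metric compactum $K=X(n,\aleph_0)$ of dimension~$n$ that already contains a copy of every hereditarily indecomposable metric compactum of dimension at most~$n$; in particular it contains a copy of every hereditarily indecomposable metric continuum of dimension at most~$n$. Since all the required copies sit inside~$K$, it suffices to produce a hereditarily indecomposable metric continuum~$Z_n$ of dimension~$n$ containing a copy of~$K$. The only thing that can fail for~$K$ is connectedness: its components are hereditarily indecomposable continua of dimension at most~$n$, exhibited as the fibers of the perfect component map $q\colon K\to\kappa(K)$ onto the zero-dimensional compact metric quotient $\kappa(K)=K/{\sim}$.

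The pseudosuspension is the construction that remedies this. I would join the components of~$K$ to a common base through crooked (Bing-type) connecting continua, producing a continuum $Z_n\supseteq K$ arranged so that every two intersecting subcontinua of~$Z_n$ are nested, i.e.\ so that $Z_n$ has Property~(KM). The crookedness is set up over the zero-dimensional parameter~$\kappa(K)$, so that the connecting structure is at most one-dimensional along the join and contributes nothing to the global dimension; hence $\dim Z_n=\dim K=n$ (recall $n\ge 1$, and $\dim Z_n\ge n$ because $Z_n$ contains a copy of an $n$-dimensional continuum). Universality is then immediate from $K\subseteq Z_n$, and connectedness is built in.

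It is worth recording that the lattice-theoretic machinery of Section~\ref{sec.2} cooperates with this step and gives both a cleaner route and a convenient way to verify the delicate point. Connectedness of a Wallman space~$wL$ is itself a finitary lattice-theoretic condition: $wL$ is disconnected exactly when $L$ contains a nontrivial complemented pair, that is, elements $a,b$ with $a\vee b=1$, $a\wedge b=0$ and $a\ne 0$, $b\ne 0$. The expansion argument used in the proofs of Theorems~\ref{thm.2.1} and~\ref{thm.2.2} applies here verbatim, so connectedness---like Property~(KM) and the partition equations~$\Phi_n$---passes to elementary sublattices and their Wallman spaces. Consequently it is enough to build, in arbitrarily large weight, a single \emph{connected} space~$W$ with Property~(KM) and $\dim W=n$ that contains a copy of each member $C_s$ of the family of all hereditarily indecomposable metric continua of dimension at most~$n$, together with a map onto $I^{\aleph_0}$ restricting to an embedding on each~$C_s$; feeding $\beta W$ into the factorization of Theorem~\ref{thm.2.2} then returns a metric hereditarily indecomposable continuum $Z_n$ of dimension~$n$ containing all the~$C_s$, exactly as universality was obtained in the proof of Theorem~\ref{thm.1.2}. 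Here $W$ is precisely the pseudosuspension of the free union $\bigoplus_s C_s$.

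The main obstacle is the pseudosuspension itself: connecting the pieces while keeping the result hereditarily indecomposable. Connectedness and the dimension bound are comparatively easy to guarantee and, as just noted, are automatically preserved by the factorization method; the hard part is verifying Property~(KM) for the joined space, since the naive joins---a wedge, or straight connecting arcs---immediately produce two intersecting non-nested continua and destroy hereditary indecomposability. This is exactly where Bing's crookedness, or equivalently a suitable inverse-limit presentation of the connecting continua, is needed, and checking that the crooked joins restore the nesting of intersecting subcontinua is the crux of the argument.
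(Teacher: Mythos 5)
Your overall strategy coincides with the paper's second proof of this corollary: apply Theorem~\ref{thm.1.2} with $\tau=\aleph_0$ to get the universal compactum $K=X(n,\aleph_0)$, then connect its components by a pseudosuspension over a zero-dimensional base sitting in the pseudo-arc. The one substantive gap is exactly the step you yourself flag as ``the crux'': you describe joining the components by crooked connecting continua and then state that verifying hereditary indecomposability of the joined space remains to be checked, but you never check it --- and that verification is the entire content of the pseudosuspension method. The paper does not carry it out from scratch either; it invokes Theorem~15 of Ma\'ckowiak~\cite{M1}, which, given the quotient map $q$ of $X(n,\aleph_0)$ onto its compact zero-dimensional metric component space $A$, embedded as a closed subset of the pseudo-arc $P$, produces a hereditarily indecomposable continuum $Z_n$ and an atomic map $r\colon Z_n\to P$ that is a homeomorphism over $P\setminus A$ and satisfies $r^{-1}(A)\cong X(n,\aleph_0)$. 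With that citation your argument closes up: $Z_n$ is a hereditarily indecomposable metric continuum containing a copy of~$K$, $\dim Z_n\le n$ by the countable sum theorem applied to $r^{-1}(A)$ and the at most one-dimensional set $r^{-1}(P\setminus A)$, and $\dim Z_n\ge n$ since $Z_n\supseteq K$. Without such a citation (or a full construction), the proposal is a correct plan rather than a proof.

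Your side remark --- that connectedness is a finitary lattice-theoretic condition preserved by the elementary-sublattice factorization, so one could instead build one large connected space $W$ with Property~(KM) and feed $\beta W$ into Theorem~\ref{thm.2.2} --- is correct as far as the preservation claim goes; the paper records the same fact in its Proposition on inverse systems (if $X$ is a continuum, all $X_\sigma$ are continua). But it does not remove the difficulty: to build $W$ you would need a pseudosuspension of the nonmetrizable, noncompact free union $\bigoplus_s C_s$, and the pseudosuspension theorem actually available is stated for metric continua. The paper's order of operations --- factor down to a metric compactum first, pseudosuspend afterwards --- is precisely what lets the known metric result do the work, so you should keep the pseudosuspension as the final step rather than the first.
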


\begin{proof}[Proof using Theorem~\ref{thm.1.1}]
Let $\mathcal{P}$ be the subset of the hyperspace $2^{I^{\aleph _0}}$ 
of the Hilbert cube consisting of all hereditarily indecomposable continua 
of dimension~$n$ or less. 
Then $\mathcal{P}$ is a  $G_\delta$-subset of~$2^{I^{\aleph_0}}$
(see  \cite{Ku}*{\S~45, IV, Theorem~4 and \S~48, V, Remark~5}).
Therefore there is a continuous surjection $\varphi:Y\to \mathcal{P}$, 
where $Y$~is the space of the irrationals. 
Then let $X$ be the following subset of $I^{\aleph_0} \times Y$:
$$
\bigl\{ (x,t): t \in Y \text{  and }x\in\varphi(t)\bigr\}
$$
and let $\pi:  I^{\aleph_0}\times Y\to Y$ be the projection.
The restriction $f=\pi \restr X : X \to Y$ is a perfect map 
(cf.~\cite{Ku1}*{\S~18} or~\cite{vM}*{Exercise~1.11.26}) 
with hereditarily indecomposable fibers. 
By Theorem~\ref{thm.1.1} there exists a hereditarily indecomposable 
$n$-dimensional compact space~$X^*$ that contains~$X$ and hence a copy
of every hereditarily indecomposable continuum of dimension~$n$.

The decomposition of~$X^*$ into its components yields a compact 
zero-di\-men\-sional space.
The pseudo-arc $P$ contains a copy of this decomposition space (as indeed does
any uncountable compact metrizable space).
Let $q:X^*\to P$ be a map such that $A=q[X]$ is that decomposition space
and $q:X\to A$ is the quotient map.
 
By Theorem~15 of~\cite{M1} there exist a hereditarily indecomposable 
continuum~$Z_{n}$  and an atomic mapping~$r$ from~$Z_{n}$ onto~$P$ such 
that $r \restr r^{-1}(P\setminus A )$ is a homeomorphism and $r^{-1}(A)$~is 
homeomorphic to~$X^*$ 
\ ($Z_{n}$~is a so-called pseudosuspension of~$X^*$ over~$P$ by~$q$). 
Since $\dim Z_{n}\le n$ by the countable sum theorem and $Z_{n}$~contains 
$X^*$ topologically, the space $Z_{n}$ has the required properties.
\end{proof}

\begin{proof}[Proof using Theorem~\ref{thm.1.2}]
Use the second half of the previous proof but now take the pseudosuspension 
of the space $X(n, \aleph_0)$ over~$P$ by~$q$, 
where $q : X(n, \aleph_0) \to P$ is a quotient map such that 
$A=q[X(n, \aleph_0)]$ is the decomposition space of~$X(n, \aleph_0)$ 
into its components.
\end{proof}

\begin{remark}
If one uses Theorem~\ref{thm.2.2} instead of Marde\v{s}i\'{c}'s
Factorization Theorem, 
and standard topological reasoning 
(see \cite{E}*{proofs of Theorems~5.4.3 and~3.4.2}) 
one gets the following results.

\begin{proposition} 
For every  hereditarily indecomposable compact space~$X$ such 
that $\dim X = n$ and the weight of~$X$ is equal to~$\tau$, there exists 
an inverse system $\mathbf{S}=\{X_{\sigma}, \pi_{\rho}^{\sigma}, \Sigma \}$, 
where $|\Sigma|\le \tau$, of metrizable hereditarily indecomposable 
compact spaces of dimension~$n$ whose limit is homeomorphic to~$X$. 
If $X$~is a continuum, then all~$X_\sigma$ are continua.
\end{proposition}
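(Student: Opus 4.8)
The plan is to replay the classical construction that represents a weight-$\tau$ compactum as an inverse limit of metrizable compacta over a directed set of size at most~$\tau$ (the arguments behind \cite{E}*{Theorems~5.4.3 and~3.4.2}), substituting Theorem~\ref{thm.2.2} for Marde\v{s}i\'c's Factorization Theorem at every step so that each factor space comes out not merely $n$-dimensional but also hereditarily indecomposable. Concretely, embed~$X$ into the Tychonoff cube~$I^{\tau}$ and, for a finite set $F\subseteq\tau$, write $q_F:X\to Y_F$ for the restriction to~$X$ of the projection onto~$I^{F}$, where $Y_F=q_F[X]\subseteq I^F$ is a metrizable compactum. Taking $\Sigma=[\tau]^{<\omega}$ directed by inclusion we have $|\Sigma|\le\tau$, and since finite projections separate points and closed sets the space~$X$ is the inverse limit of the $Y_F$ along the coordinate projections; the only defects are that $\dim Y_F$ may be larger than~$n$ and that $Y_F$ need not be hereditarily indecomposable.

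To repair both defects simultaneously, apply Theorem~\ref{thm.2.2} to each~$q_F$: since $X$ is hereditarily indecomposable one obtains a factorization $q_F=h_F\circ g_F$ with $g_F:X\to X_F$ and $h_F:X_F\to Y_F$, in which $X_F$ is hereditarily indecomposable, $\dim X_F=\dim X=n$ and $w(X_F)=w(Y_F)$, so that $X_F$ is a metrizable hereditarily indecomposable compactum of dimension~$n$. Taking these $X_F$ as the factor spaces of the desired system, the maps $g_F$ still separate points and closed sets, since each~$g_F$ is at least as fine as the separating map~$q_F$ (because $q_F=h_F\circ g_F$ factors through it). Hence the natural diagonal $X\to\varprojlim X_F$ is a continuous injection out of a compact space, and therefore an embedding; as the $g_F$ are surjective its image is dense as well as closed, so it is onto and $X$ is homeomorphic to~$\varprojlim X_F$.

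The real work is organizing the individually produced factorizations into a \emph{coherent} inverse system: for $F\subseteq F'$ one needs a bonding map $\pi_{F'}^F:X_{F'}\to X_F$ satisfying $g_F=\pi_{F'}^F\circ g_{F'}$, and independent applications of Theorem~\ref{thm.2.2} do not deliver such compatibilities. I would resolve this by the usual closing-off device underlying \cite{E}*{Theorems~5.4.3 and~3.4.2}, which is most transparent if one unwinds Theorem~\ref{thm.2.2} back to its proof: the factorization there is obtained by passing to an elementary sublattice of the lattice of closed sets of~$X$. So I would assign to each $F\in\Sigma$ a \emph{countable} elementary sublattice~$L_F$, arranged inductively on~$|F|$ so that $L_F\subseteq L_{F'}$ whenever $F\subseteq F'$ and so that $\bigcup_{F}L_F$ contains a base for the closed sets of~$X$; setting $X_F=wL_F$ then yields metrizable hereditarily indecomposable factors of dimension~$n$ (by Lemma~\ref{lemma.elem.KM} and the partition-theorem argument of Theorem~\ref{thm.2.2}), while the lattice inclusions $L_F\subseteq L_{F'}$ induce the bonding maps~$\pi_{F'}^F$ automatically, all with $|\Sigma|\le\tau$. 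Verifying this coherence is the main obstacle; granting it, the representation follows at once, and when $X$ is a continuum each~$X_F$, being a surjective continuous image of the connected space~$X$, is itself a continuum.
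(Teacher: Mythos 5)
Your proposal is correct and follows exactly the route the paper intends: the paper offers no detailed proof, only the instruction to run the standard inverse-system representation from \cite{E}*{proofs of Theorems~5.4.3 and~3.4.2} with Theorem~\ref{thm.2.2} in place of Marde\v{s}i\'c's Factorization Theorem, and your closing-off construction via a $\subseteq$-coherent family of countable elementary sublattices $L_F$ (with $X_F=wL_F$ and bonding maps induced by the inclusions, exactly as in the proof of Theorem~\ref{thm.2.2}) is the natural way to carry that out. The coherence step you flag as the ``main obstacle'' is in fact routine by L\"owenheim--Skolem closure by induction on $|F|$, so no genuine gap remains.
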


\begin{proposition}  
Every normal $n$-dimensional space~$X$ of weight~$\tau$ 
that has Property~(KM) has a hereditarily indecomposable
compactification $\tilde{X}$ of dimension~$n$ and of weight~$\tau$.
\end{proposition}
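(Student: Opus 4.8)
The plan is to mimic the structure of the proof of Theorem~\ref{thm.1.1}, replacing the \v{C}ech--Stone step and the metrizability target with the appropriate infinitary analogues. The statement says: every normal $n$-dimensional space~$X$ of weight~$\tau$ with Property~(KM) has a hereditarily indecomposable compactification $\tilde X$ with $\dim\tilde X=n$ and $w(\tilde X)=\tau$. First I would pass to the \v{C}ech--Stone compactification~$\beta X$. By Theorem~\ref{thm.2.1} the hypothesis that $X$ has Property~(KM) guarantees that $\beta X$ has Property~(KM) as well, and is therefore hereditarily indecomposable. Standard facts about $\beta X$ for normal spaces give $\dim\beta X=\dim X=n$ (the covering dimension is preserved under \v{C}ech--Stone compactification for normal spaces, by \cite{E}*{Theorem~3.5.10} or the reasoning in \cite{E}*{proof of Theorem~3.4.2}). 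However, the weight of~$\beta X$ is typically far larger than~$\tau$, so $\beta X$ itself is not the desired compactification; it is only a convenient hereditarily indecomposable ``cover'' from which we will cut down.

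The key step is to apply the factorization technique of Theorem~\ref{thm.2.2}, but in a form that produces a compactification \emph{of~$X$} rather than merely a factor of a given map. Concretely, I would realize~$\beta X$ as the Wallman representation~$w\mathcal D$ of its lattice~$\mathcal D$ of closed sets, fix a base~$\mathcal B$ for the closed sets of~$X$ (equivalently of~$\beta X$) of cardinality~$\tau$, and use the L\"owenheim--Skolem theorem to find an elementary sublattice~$\mathcal C$ of~$\mathcal D$ with $\mathcal B\subseteq\mathcal C$ and $|\mathcal C|=\tau$. Set $\tilde X=w\mathcal C$. The inclusion $\mathcal C\subseteq\mathcal D$ induces a continuous surjection $g:\beta X\to\tilde X$, and because $\mathcal C$ contains a base separating the points of~$X$, the restriction $g\restr X$ is a homeomorphic embedding of~$X$ into~$\tilde X$ with dense image; thus $\tilde X$ is genuinely a compactification of~$X$. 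By Lemma~\ref{lemma.elem.KM} the space $\tilde X=w\mathcal C$ is hereditarily indecomposable, and the dimension-preservation argument in the proof of Theorem~\ref{thm.2.2} (turning $\dim\le n$ into a lattice equation~$\Phi_n$ via the Theorem on Partitions and using elementarity plus the expansion trick) gives $\dim\tilde X=\dim\beta X=n$. Finally $w(\tilde X)=|\mathcal C|=\tau$, since $\{\bar a:a\in\mathcal C\}$ is a base for the closed sets of~$w\mathcal C$.

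The main obstacle I expect is ensuring that $\mathcal C$ separates the points of~$X$ well enough for $g\restr X$ to be an embedding, not merely a continuous injection, and that the image $g[X]$ is dense so that $\tilde X$ really is a compactification of~$X$ in the strict sense. This is where one must choose~$\mathcal B$ carefully: it should be a base for the closed sets of~$X$ that separates points from closed sets, and one must check that the induced map on~$X$ recovers the topology of~$X$ rather than a coarser one. The cited ``standard topological reasoning'' in the proof of Theorem~3.4.2 of~\cite{E} handles exactly this point—namely that a factorization through a Wallman quotient that retains a point-separating base restricts to an embedding on the dense copy of~$X$—so I would lean on that reference to discharge the embedding verification and conclude that $\tilde X$ is the required hereditarily indecomposable compactification of weight~$\tau$ and dimension~$n$.
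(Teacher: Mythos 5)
Your argument is correct and is essentially the route the paper intends: the paper disposes of this proposition in one line by invoking Theorem~\ref{thm.2.2} together with the standard reasoning from Engelking's proofs of Theorems~3.4.2 and~5.4.3, which is exactly the $\beta X$--plus--L\"owenheim--Skolem factorization you carry out (pass to $\beta X$, which is hereditarily indecomposable by Theorem~\ref{thm.2.1} and has $\dim\beta X=\dim X$, then cut down to weight~$\tau$ by an elementary sublattice). Your unpacking of Theorem~\ref{thm.2.2}---running the elementary-sublattice construction directly over a closed base of~$X$ rather than applying the theorem to a map onto a preassigned weight-$\tau$ compactification---is only a cosmetic variation, and your flagged concern about $g\restr X$ being a dense embedding is precisely the ``standard topological reasoning'' the paper delegates to Engelking.
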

\end{remark}

\begin{remark}
The results of this paper remain valid if in the formulation of Property~(KM)
one replaces closed sets by zero-sets and open sets by cozero-sets.
This implies that in Theorem~\ref{thm.2.1} one can relax the assumption
of normality to complete regularity.
\end{remark}

\begin{bibdiv}
\begin{biblist}

\bib{Aarts}{article}{
  author={Aarts, J. M.},
  title={Wallman-Shanin Compactification},
  pages={218\ndash 220},
  book={ title={Encyclopedia of general topology}, 
         editor={Hart, Klaas Pieter}, 
         editor={Nagata, Jun-iti}, 
         editor={Vaughan, Jerry E.}, 
         publisher={Elsevier Science Publishers B.V.}, 
         place={Amsterdam}, 
         date={2004}, 
         pages={x+526}, 
         isbn={0-444-50355-2}, 
         review={\MR {2049453 (2005d:54001)}}, 
}}

\bib{MR1785837}{article}{
   author={Bankston, Paul},
   title={Some applications of the ultrapower theorem to the theory of
   compacta},
   note={Papers in honour of Bernhard Banaschewski (Cape Town, 1996)},
   journal={Applied Categorical Structures},
   volume={8},
   date={2000},
   number={1-2},
   pages={45--66},
   issn={0927-2852},
   review={\MR{1785837 (2001f:54011)}},
}

\bib{E}{book}{
   author={Engelking, Ryszard},
   title={Theory of dimensions finite and infinite},
   series={Sigma Series in Pure Mathematics},
   volume={10},
   publisher={Heldermann Verlag},
   place={Lemgo},
   date={1995},
   pages={viii+401},
   isbn={3-88538-010-2},
   review={\MR{1363947 (97j:54033)}},
}
\bib{HvMP}{article}{
   author={Hart, Klaas Pieter},
   author={van Mill, Jan},
   author={Pol, Roman},
   title={Remarks on hereditarily indecomposable continua},
   journal={Topology Proceedings},
   volume={25},
   date={2000},
   pages={179--206 (2002)},
   issn={0146-4124},
   review={\MR{1925683 (2003k:54028)}},
}

\bib{HodgesShorterModelTheory}{book}{
    author={Hodges, Wilfrid},
     title={A shorter model theory},
 publisher={Cambridge University Press},
     place={Cambridge},
      date={1997},
     pages={x+310},
      isbn={0-521-58713-1},
    review={\MR{98i:03041}},
}

\bib{KM}{article}{
   author={Krasinkiewicz, J{\'o}zef},
   author={Minc, Piotr},
   title={Mappings onto indecomposable continua},
   journal={Bulletin de l'Acad\'emie Polonaise des Sciences. 
            S\'erie des Sciences Math\'ematiques, Astronomiques et Physiques},
   volume={25},
   date={1977},
   pages={675--680},
   issn={0001-4117},
   review={\MR{0464184 (57 \#4119)}},
}

\bib{Ku1}{book}{
   author={Kuratowski, K.},
   title={Topology. Vol. I},
   publisher={Academic Press},
   place={New York},
   date={1966},
   pages={xx+560},
   review={\MR{0217751 (36 \#840)}},
}

\bib{Ku}{book}{
   author={Kuratowski, K.},
   title={Topology. Vol. II},
   publisher={Academic Press},
   place={New York},
   date={1968},
   pages={xiv+608},
   review={\MR{0259835 (41 \#4467)}},
}
		
\bib{M1}{article}{
   author={Ma{\'c}kowiak, T.},
   title={The condensation of singularities in arc-like continua},
   journal={Houston Journal of Mathematics},
   volume={11},
   date={1985},
   pages={535--558},
   issn={0362-1588},
   review={\MR{837992 (87m:54099)}},
}

\bib{M2}{article}{
   author={Ma{\'c}kowiak, T.},
   title={A universal hereditarily indecomposable continuum},
   journal={Proceedings of the American Mathematical Society},
   volume={94},
   date={1985},
   pages={167--172},
   issn={0002-9939},
   review={\MR{781076 (86j:54061)}},
}

\bib{vM}{book}{
   author={van Mill, Jan},
   title={The infinite-dimensional topology of function spaces},
   series={North-Holland Mathematical Library},
   volume={64},
   publisher={North-Holland Publishing Co.},
   place={Amsterdam},
   date={2001},
   pages={xii+630},
   isbn={0-444-50557-1},
   review={\MR{1851014 (2002h:57031)}},
}
		
\bib{vdS}{thesis}{
 author={van der Steeg, B. J.},
 title={Models in Topology},
 type={PhD thesis},
 date={2003},
 institution={TU Delft}
}

\bib{Wallman}{article}{
 author={Wallman, Henry},
 title={Lattices and topological spaces},
 journal={Annals of Mathematics},
 volume={39},
 date={1938},
 pages={112--126},
 review={\MR{1503392}}
}

\end{biblist}
\end{bibdiv}
\end{document}